\newtheorem{theorem}{Theorem}[section]
\newtheorem{proposition}[theorem]{Proposition}
\newtheorem{proof}{\textmd{\textit{Proof.}}}
\newtheorem{remark}[theorem]{Remark}
\newtheorem{example}[theorem]{Example}
\newcommand{\qedd}{\hfill \Box}
\newcommand{\ve}{\varepsilon}
\newcommand{\lra}{\longrightarrow}
\newcommand{\e}{\mathrm{e}}
\newcommand{\g}{\mathrm{g}}
\newcommand{\R}{\ensuremath{\mathbb{R}}}
\newcommand{\cC}{\ensuremath{\mathcal{C}}}
\newcommand{\cI}{\ensuremath{\mathcal{I}}}
\newcommand{\fm}{\ensuremath{\mathfrak{m}}}
\newcommand{\sP}{\ensuremath{\mathsf{P}}}
\def\vol{\mathop{\mathrm{vol}}\nolimits}
\def\supp{\mathop{\mathrm{supp}}\nolimits}
\def\Ent{\mathop{\mathrm{Ent}}\nolimits}
\def\Ric{\mathop{\mathrm{Ric}}\nolimits}
\def\CD{\mathop{\mathrm{CD}}\nolimits}
\def\RCD{\mathop{\mathrm{RCD}}\nolimits}
\title{Quantitative estimates for the Bakry--Ledoux isoperimetric inequality.~II}
\author{Cong Hung MAI\thanks{
Department of Mathematics, Osaka University, Osaka 560-0043, Japan
({\sf hungmcuet@gmail.com}, {\sf s.ohta@math.sci.osaka-u.ac.jp})}
\and
Shin-ichi OHTA\footnotemark[1] \textsuperscript{,}\thanks{
RIKEN Center for Advanced Intelligence Project (AIP),
1-4-1 Nihonbashi, Tokyo 103-0027, Japan}}
\date{}
\begin{document}

\maketitle


\begin{abstract}
Concerning quantitative isoperimetry for a weighted Riemannian manifold
satisfying $\Ric_{\infty} \ge 1$, we give an $L^1$-estimate exhibiting that
the push-forward of the reference measure by the guiding function
(arising from the needle decomposition) is close to the Gaussian measure.
We also show $L^p$- and $W_2$-estimates in the $1$-dimensional case.
\end{abstract}

\section{Introduction}

This short article is devoted to several further applications of the detailed estimates
in \cite{MO} to quantitative isoperimetry.
In \cite{MO}, on a weighted Riemannian manifold $(M,g,\fm)$
(with $\fm=\e^{-\Psi} \vol_g$) satisfying $\fm(M)=1$ and $\Ric_{\infty} \ge 1$,
we investigated the stability of the \emph{Bakry--Ledoux isoperimetric inequality} \cite{BL}:
\begin{equation}\label{eq:BL}
\sP(A) \ge \cI_{(\R,\bm{\gamma})}\big( \fm(A) \big)
\end{equation}
for any Borel set $A \subset M$, where $\sP(A)$ is the perimeter of $A$,
$\bm{\gamma}(dx)=(2\pi)^{-1/2} \e^{-x^2/2} \,dx$ is the Gaussian measure on $\R$,
and $\cI_{(\R,\bm{\gamma})}$ is its \emph{isoperimetric profile} written as
\begin{equation}\label{eq:a}
\cI_{(\R,\bm{\gamma})}(\theta) =\frac{\e^{-a_{\theta}^2/2}}{\sqrt{2\pi}},
 \qquad \theta =\bm{\gamma}\big( (-\infty,a_{\theta}] \big).
\end{equation}
It is known by \cite[Theorem~18.7]{Mo} (see also \cite[\S 3]{Ma2})
that equality holds in \eqref{eq:BL} for some $A$ with $\theta=\fm(A) \in (0,1)$ if and only if
$(M,g,\fm)$ is isometric to the product of $(\R,|\cdot|,\bm{\gamma})$ and
a weighted Riemannian manifold $(\Sigma,g_{\Sigma},\fm_{\Sigma})$ of $\Ric_{\infty} \ge 1$.
Moreover, $A$ is necessarily of the form
$(-\infty,a_{\theta}] \times \Sigma$ or $[-a_{\theta},\infty) \times \Sigma$ (so-called a \emph{half-space}).
Then, the stability result \cite[Theorem~7.5]{MO} asserts that,
if equality in \eqref{eq:BL} nearly holds,
then $A$ is close to a kind of half-space in the sense that
the symmetric difference between them has a small volume.

The proof as well as the formulation of \cite[Theorem~7.5]{MO}
are based on the \emph{needle decomposition} paradigm (also called the \emph{localization}),
which was established by Klartag \cite{Kl} for Riemannian manifolds
and has provided a significant contribution specifically in the study of isoperimetric inequalities
(we refer to \cite{CM} for a generalization to metric measure spaces satisfying
the curvature-dimension condition, and to \cite{CMM} for a stability result).
The half-space we mentioned above is in fact a sub-level or super-level set
of the \emph{guiding function} arising in the needle decomposition
(see Section~\ref{sc:M} and \cite{MO} for more details).
The needle decomposition enables us to decompose a global inequality on $M$
into the corresponding $1$-dimensional inequalities on minimal geodesics in $M$
(called \emph{needles} or \emph{transport rays}).
Therefore, a more detailed $1$-dimensional analysis on needles
will furnish a better estimate on $M$.

The $1$-dimensional analysis in \cite{MO} is concentrated in Proposition~3.2 in it
(restated in Proposition~\ref{pr:psi} below), which gives a very detailed estimate
on the difference from the Gaussian measure $\bm{\gamma}$.
In this article, as an application of the analysis developed in \cite{MO},
we show an $L^1$-bound between $\bm{\gamma}$ and
the push-forward measure $u_* \fm$ of $\fm$ by the guiding function $u$:
\[ \|\rho \cdot \e^{\bm{\psi}_{\g}} -1\|_{L^1(\bm{\gamma})} \le C(\theta,\ve) \delta^{(1-\ve)/(9-3\ve)}, \]
where $u_* \fm=\rho \,dx$ and $\bm{\gamma}=\e^{-\bm{\psi}_{\g}} \,dx$
(see Theorem~\ref{th:L^1/M} for the precise statement).
In the $1$-dimensional case (on intervals), we also prove
an $L^p$-bound with the improved (and sharp) order $\delta^{1/p}$
(Proposition~\ref{pr:L^p/I}; see Example~\ref{ex:1D} for the sharpness)
and an estimate of the $L^2$-Wasserstein distance $W_2$ (Proposition~\ref{pr:W_2/I}).
The use of $L^p$ and $W_2$ (instead of the volume of the symmetric difference)
is inspired by stability results for the Poincar\'e and log-Sobolev inequalities
(e.g., \cite{BF,BGRS,CF,IK,IM}).
We refer to Remark~\ref{rm:relate} for some further related works and open problems.
\medskip

\noindent
{\it Acknowledgements.}
We are grateful to Emanuel Indrei, whose question on the $L^p$-estimate led us to write this paper.
CHM was supported by Grant-in-Aid for JSPS Fellows 20J11328.
SO was supported in part by JSPS Grant-in-Aid for Scientific Research (KAKENHI) 19H01786.

\section{Quantitative estimates on intervals}\label{sc:1D}

We first consider the $1$-dimensional case (on intervals)
and establish quantitative stability estimates in terms of the $L^p$-norm and the $W_2$-distance.
The $L^1$-bound will be instrumental to study the Riemannian case in the next section.

\subsection{An $L^p$-estimate}\label{ssc:L^p}

Throughout this section,
let $I \subset \R$ be an open interval equipped with a probability measure
$\fm=\e^{-\psi} \,dx$ such that $\psi$ is \emph{$1$-convex} in the sense that
\[ \psi\big( (1-t)x +ty \big) \le (1-t)\psi(x) +t\psi(y) -\frac{1}{2}(1-t)t|x-y|^2 \]
for all $x,y \in I$ and $t \in (0,1)$.
This means that $(I,|\cdot|,\fm)$ satisfies $\Ric_{\infty} \ge 1$
(or the curvature-dimension condition $\CD(1,\infty)$), and \eqref{eq:BL} holds.
The $1$-dimensional isoperimetric inequality is well investigated in convex analysis.
An important fact due to Bobkov \cite[Proposition~2.1]{Bob} is that an isoperimetric minimizer
can be always taken as a half-space of the form $(-\infty,a] \cap I$ or $[b,\infty) \cap I$.
Now we restate \cite[Proposition~3.2]{MO}, which is the source of all the estimates.
Recall that $\bm{\gamma}=\e^{-\bm{\psi}_{\g}} \,dx$ is the Gaussian measure.

\begin{proposition}[\cite{MO}]\label{pr:psi}
Fix $\theta \in (0,1)$ and suppose that
\begin{equation}\label{eq:a_theta}
\fm\big( (-\infty,a_{\theta}] \cap I \big) =\theta
\end{equation}
and
\begin{equation}\label{eq:deficit}
\e^{-\psi(a_{\theta})} \le \e^{-\bm{\psi}_{\g}(a_{\theta})} +\delta
\end{equation}
hold for sufficiently small $\delta>0$ $($relative to $\theta)$.
Then we have
\begin{equation}\label{eq:delta>}
\psi(x)-\bm{\psi}_{\g}(x)
 \ge \big( \psi'_+(a_{\theta})-a_{\theta} \big) (x-a_{\theta}) -C(\theta) \delta
\end{equation}
for every $x \in I$, and
\begin{equation}\label{eq:delta<}
\psi(x)-\bm{\psi}_{\g}(x)
 \le \big( \psi'_+(a_{\theta})-a_{\theta} \big) (x-a_{\theta}) +C(\theta) \sqrt{\delta}
\end{equation}
for every $x \in [S,T] \subset I$ such that
$\lim_{\delta \to 0}S=-\infty$ and $\lim_{\delta \to 0}T=\infty$,
where $\psi'_+$ denotes the right derivative of $\psi$ and
$C(\theta)$ is a positive constant depending only on $\theta$.
\end{proposition}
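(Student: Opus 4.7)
The plan is to analyze the difference $f(x) := \psi(x) - \bm{\psi}_{\g}(x)$ directly. Since $\bm{\psi}_{\g}(x) = x^{2}/2 + \tfrac{1}{2}\log(2\pi)$, the $1$-convexity of $\psi$ is equivalent to convexity of $f$ on $I$. I would set $\alpha := \psi'_{+}(a_{\theta}) - a_{\theta} = f'_{+}(a_{\theta})$, introduce the affine tangent $\ell(x) := \alpha(x - a_{\theta})$, and work with $g := f - \ell$, which is convex with $g'_{+}(a_{\theta}) = 0$; hence $g$ attains its global minimum at $a_{\theta}$ and is monotone on either side. For the lower bound \eqref{eq:delta>} it is then enough to control $g(a_{\theta}) = f(a_{\theta})$: rewriting \eqref{eq:deficit} as $\bm{\psi}_{\g}(a_{\theta}) - \psi(a_{\theta}) \le \log(1 + \delta\, \e^{\bm{\psi}_{\g}(a_{\theta})}) \le \delta\sqrt{2\pi}\,\e^{a_{\theta}^{2}/2}$ immediately yields $g(a_{\theta}) \ge -C(\theta)\,\delta$, and the minimum property then extends this globally on $I$.

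The upper bound \eqref{eq:delta<} is the crux. My plan is to couple the preceding pointwise lower bound with the two mass identities $\fm(I \cap (-\infty,a_{\theta}]) = \theta = \bm{\gamma}((-\infty,a_{\theta}])$ and $\fm(I \cap [a_{\theta},\infty)) = 1-\theta = \bm{\gamma}([a_{\theta},\infty))$. Exponentiating $g \ge -C(\theta)\delta$ yields $\e^{-\psi} \le \e^{C(\theta)\delta}\,\e^{-\bm{\psi}_{\g} - \ell}$, and integrating over each half produces $\theta \le \e^{C\delta} Z_{-}$ and $1-\theta \le \e^{C\delta} Z_{+}$, where
\[
Z_{-} = \e^{\alpha a_{\theta} + \alpha^{2}/2}\,\bm{\gamma}\bigl((-\infty,a_{\theta}+\alpha]\bigr), \qquad Z_{+} = \e^{\alpha a_{\theta} + \alpha^{2}/2}\,\bm{\gamma}\bigl([a_{\theta}+\alpha,\infty)\bigr)
\]
arise by completing the square. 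Adding the two inequalities forces $\alpha a_{\theta} + \alpha^{2}/2 \ge -C\delta$; expanding the incomplete Gaussian factors in $\alpha$ around $0$ and exploiting both one-sided inequalities then pins down $|\alpha| \le C(\theta)\sqrt{\delta}$, so $\alpha$ is negligible at the target scale.

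With $\alpha$ controlled, I would argue by contradiction for the pointwise bound. Suppose $g(x_{0}) > M\sqrt{\delta}$ at some $x_{0} \in I \cap [a_{\theta}, T]$; by monotonicity, $g \ge M\sqrt{\delta}$ on $I \cap [x_{0},\infty)$, whence $\e^{-\psi} \le \e^{-M\sqrt{\delta}}\,\e^{-\bm{\psi}_{\g}-\ell}$ there. Splitting $[a_{\theta},\infty)$ at $x_{0}$, comparing with the right-hand mass identity, and absorbing the $\e^{C\delta}$-excess from $[a_{\theta},x_{0}]$, one obtains an inequality of the shape
\[
\tfrac{1}{2} M\sqrt{\delta}\, \bm{\gamma}\bigl([T+\alpha,\infty)\bigr)\,\e^{\alpha a_{\theta}+\alpha^{2}/2} \le C(\theta)\,\sqrt{\delta},
\]
so that $M \le C(\theta)/\bm{\gamma}([T,\infty))$. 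Choosing $T$ (and symmetrically $S$) to tend to $\pm\infty$ slowly enough as $\delta \to 0$ then gives \eqref{eq:delta<} uniformly on $[S,T]$. The delicate step is the two-sided control of $\alpha$ at the $\sqrt{\delta}$-scale: without it, the affine reference $\ell$ would be mis-specified and the sharp $\sqrt{\delta}$ rate in \eqref{eq:delta<} would degrade.
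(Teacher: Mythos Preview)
The paper does not prove Proposition~\ref{pr:psi}; it is quoted from \cite[Proposition~3.2]{MO}, so there is no in-paper argument to compare against line by line. Your overall strategy---exploit the convexity of $f=\psi-\bm{\psi}_{\g}$, subtract the tangent $\ell$ at $a_{\theta}$, and play the resulting pointwise lower bound against the two one-sided mass identities---is the natural one and, judging from the paper's reliance on \eqref{eq:psi'/e}, is in the same spirit as \cite{MO}. Your derivation of the lower bound \eqref{eq:delta>} is correct.

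There is, however, a genuine quantitative gap in your treatment of the upper bound. You claim $|\alpha|\le C(\theta)\sqrt{\delta}$, but the paper quotes the sharper estimate $|\alpha|=|\psi'_{+}(a_{\theta})-a_{\theta}|\le C(\theta)\delta$ from \cite[(3.9)]{MO} (recorded here as \eqref{eq:psi'/e}), and this order is not a luxury---it is what makes your contradiction close. With only $|\alpha|=O(\sqrt{\delta})$ the mass defect $e^{C\delta}Z_{+}-(1-\theta)$ on the right of your displayed inequality is $O(\sqrt{\delta})$, so you obtain $M\le C(\theta)/\bm{\gamma}([T,\infty))$; letting $T\to\infty$ then makes the constant blow up, and you cannot conclude \eqref{eq:delta<} with a constant depending \emph{only} on $\theta$. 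If instead $|\alpha|\le C(\theta)\delta$, the right-hand side drops to $C(\theta)\delta$, giving $M\sqrt{\delta}\cdot\bm{\gamma}([T,\infty))\le C(\theta)\delta$; choosing $T=T(\delta)$ with $\bm{\gamma}([T,\infty))\sim\sqrt{\delta}$ then yields $M\le C(\theta)$ together with $T\to\infty$, exactly as required.

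The fix lies inside your own argument. The map $\alpha\mapsto Z_{+}(\alpha)=\int_{a_{\theta}}^{\infty} e^{-\alpha(x-a_{\theta})}\,d\bm{\gamma}$ is smooth and strictly decreasing with $Z_{+}(0)=1-\theta$ and $Z_{+}'(0)=-\int_{a_{\theta}}^{\infty}(x-a_{\theta})\,d\bm{\gamma}<0$, while $Z_{+}(\alpha)\to 0$ as $\alpha\to+\infty$; hence the constraint $Z_{+}(\alpha)\ge(1-\theta)e^{-C\delta}$ forces $\alpha\le C(\theta)\delta$ by inverting $Z_{+}$ near $1-\theta$. The symmetric argument with $Z_{-}$ gives $\alpha\ge -C(\theta)\delta$. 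Thus you should upgrade your $\sqrt{\delta}$ control of $\alpha$ to the sharp $\delta$ scale; once that is done, the rest of your scheme goes through.
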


The first condition \eqref{eq:a_theta} means that $I$ is ``centered''
in comparison with $\bm{\gamma}$ which satisfies $\bm{\gamma}((-\infty,a_{\theta}])=\theta$
(as in \eqref{eq:a}).
Note also that $\e^{-\psi(a_{\theta})} \ge \e^{-\bm{\psi}_{\g}(a_{\theta})}$ holds
by the isoperimetric inequality \eqref{eq:BL}
(since $\sP((-\infty,a_{\theta}] \cap I)=\e^{-\psi(a_{\theta})}$),
and then \eqref{eq:deficit} tells that the \emph{deficit} of $(-\infty,a_{\theta}] \cap I$
in the isoperimetric inequality is less than or equal to $\delta$.

Besides the above proposition, we also need the following estimate in its proof
(see \cite[(3.9)]{MO}):
\begin{equation}\label{eq:psi'/e}
\limsup_{\delta \to 0} \frac{|\psi'_+(a_{\theta}) -a_{\theta}|}{\delta}
 \le C(\theta).
\end{equation}
The lower bound \eqref{eq:delta>} enables us to
obtain the following $L^p$-estimate between
$\bm{\gamma}=\e^{-\bm{\psi}_{\g}} \,dx$ and $\fm=\e^{\bm{\psi}_{\g}-\psi} \,\bm{\gamma}|_I$.
(We remark that the upper bound \eqref{eq:delta<} will not be used.)

\begin{proposition}[An $L^p$-estimate on $I$]\label{pr:L^p/I}
Assume \eqref{eq:a_theta} and \eqref{eq:deficit}.
Then we have
\[ \| \e^{\bm{\psi}_{\g}-\psi} -1 \|_{L^p(\bm{\gamma})} \le C(p,\theta) \delta^{1/p} \]
for all $p \in [1,\infty)$ and sufficiently small $\delta>0$ $($relative to $\theta$ and $p)$,
where we set $\e^{\bm{\psi}_{\g}-\psi}:=0$ on $\R \setminus I$.
\end{proposition}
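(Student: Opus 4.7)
The plan is to split the density $f := e^{\bm{\psi}_{\g} - \psi}$ (extended by zero outside $I$) into its excess $(f-1)_+$ and its deficit $(1-f)_+$, and to bound each separately. Since $f$ is the density of $\fm$ with respect to $\bm{\gamma}$ and $\fm(I) = 1 = \bm{\gamma}(\R)$, mass conservation furnishes the key identity $\int_\R (f-1)_+ \, d\bm{\gamma} = \int_\R (1-f)_+ \, d\bm{\gamma}$. The point is that these two parts must be controlled by different mechanisms, with the rate $\delta^{1/p}$ forced by the deficit side.

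On the excess side, I would rewrite \eqref{eq:delta>} as
\[ \bm{\psi}_{\g}(x) - \psi(x) \le -\alpha(x - a_{\theta}) + C(\theta) \delta, \qquad \alpha := \psi'_+(a_{\theta}) - a_{\theta}, \]
and invoke \eqref{eq:psi'/e} to see $|\alpha| \le C(\theta) \delta$ for $\delta$ small. Exponentiating (and using $f = 0$ off $I$) gives the global upper bound $f(x) \le \exp(-\alpha(x - a_{\theta}) + C(\theta)\delta)$ on all of $\R$. The elementary estimate $|e^u - 1| \le |u| e^{|u|}$ then yields the pointwise control
\[ (f(x) - 1)_+ \le \bigl( |\alpha| |x - a_{\theta}| + C(\theta) \delta \bigr) \exp\bigl( |\alpha| |x - a_{\theta}| + C(\theta) \delta \bigr). \]
The $p$-th power of the right-hand side is integrable against $\bm{\gamma}$ uniformly in small $\delta$ (since $|\alpha| \to 0$ and Gaussian moment generating functions are finite), and every factor of $|\alpha|$ or $\delta$ contributes an extra $\delta$, producing
\[ \|(f-1)_+\|_{L^p(\bm{\gamma})} \le C(p, \theta) \delta. \]

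For the deficit side, no matching pointwise upper bound on $1-f$ is directly available from \eqref{eq:delta>}, so I would use only the trivial pointwise bound $0 \le (1-f)_+ \le 1$. This gives $(1-f)_+^p \le (1-f)_+$, and combined with the mass-conservation identity and the excess estimate at $p=1$ it yields
\[ \|(1-f)_+\|_{L^p(\bm{\gamma})}^p \le \int_\R (1-f)_+ \, d\bm{\gamma} = \int_\R (f-1)_+ \, d\bm{\gamma} \le C(\theta) \delta. \]
Taking $p$-th roots and summing with the excess bound, and noting $\delta \le \delta^{1/p}$ for $\delta \in (0,1)$, completes the proof. I foresee no real obstacle: the integrability check on the excess side is routine, and the conceptual point worth flagging is that the rate $\delta^{1/p}$ is imposed entirely by the deficit side, where only an $L^\infty$--$L^1$ interpolation is available and inevitably costs a factor of $1/p$ in the exponent. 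This structural limitation is presumably exactly what Example~\ref{ex:1D} exploits to demonstrate sharpness.
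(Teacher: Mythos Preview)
Your proposal is correct and follows essentially the same route as the paper's proof: split $f-1$ into $(f-1)_+$ and $(1-f)_+$, control the deficit part via $(1-f)_+^p \le (1-f)_+$ together with the mass-balance identity $\int (1-f)_+\,d\bm\gamma = \int (f-1)_+\,d\bm\gamma$, and control the excess part using \eqref{eq:delta>} and \eqref{eq:psi'/e}. The only cosmetic difference is the pointwise inequality used on the excess side: the paper applies $(e^u-1)^p \le e^{pu}-1$ and then computes the resulting Gaussian integral explicitly, whereas you use $|e^u-1|\le |u|e^{|u|}$ and appeal to Gaussian exponential moments; your version in fact yields the slightly stronger bound $\|(f-1)_+\|_{L^p(\bm\gamma)}\le C(p,\theta)\delta$ rather than $C(p,\theta)\delta^{1/p}$, though this does not affect the final result since the deficit side dictates the rate.
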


\begin{proof}
In this proof, we denote by $C$ a positive constant depending on $\theta$,
and put $a:=a_{\theta}$ for brevity.
Since $\e^{\bm{\psi}_{\g}-\psi} -1 \ge -1$ and $\fm(I)=\bm{\gamma}(\R)=1$, we find
\begin{align*}
\| \e^{\bm{\psi}_{\g}-\psi} -1 \|_{L^p(\bm{\gamma})}^p
&= \int_I \big[ \e^{\bm{\psi}_{\g}-\psi} -1 \big]_+^p \,d\bm{\gamma}
 +\int_{-\infty}^{\infty} \big[ 1-\e^{\bm{\psi}_{\g}-\psi} \big]_+^p \,d\bm{\gamma} \\
&\le \int_I \big[ \e^{\bm{\psi}_{\g}-\psi} -1 \big]_+^p \,d\bm{\gamma}
 +\int_{-\infty}^{\infty} \big[ 1-\e^{\bm{\psi}_{\g}-\psi} \big]_+ \,d\bm{\gamma} \\
&= \int_I \big[ \e^{\bm{\psi}_{\g}-\psi} -1 \big]_+^p \,d\bm{\gamma}
 +\int_I \big[ \e^{\bm{\psi}_{\g}-\psi} -1 \big]_+ \,d\bm{\gamma},
\end{align*}
where $[r]_+:=\max\{r,0\}$.
Thus, we need to estimate only $[ \e^{\bm{\psi}_{\g}-\psi} -1]_+$.
Observe that
\[ \big[ \e^{(\bm{\psi}_{\g}-\psi)(x)} -1 \big]_+^p
 \le \big( \e^{C\delta |x-a| +C\delta}-1 \big)^p
 \le \e^{p(C\delta |x-a| +C\delta)}-1 \]
from \eqref{eq:delta>} and \eqref{eq:psi'/e}, and hence
\begin{align*}
\int_I \big[ \e^{\bm{\psi}_{\g}-\psi} -1 \big]_+^p \,d\bm{\gamma}
&\le \int_{-\infty}^{\infty} \big( \e^{p(C\delta |x-a| +C\delta)}-1 \big) \,\bm{\gamma}(dx) \\
&= \frac{\e^{pC\delta}}{\sqrt{2\pi}}
 \int_{-\infty}^{\infty} \exp\left( -\frac{x^2}{2}+pC\delta |x-a| \right) \,dx -1.
\end{align*}
Dividing the integral into $(-\infty,a]$ and $[a,\infty)$,
we continue the calculation as
\begin{align*}
&\int_{-\infty}^a \exp\left( -\frac{x^2}{2}-pC\delta (x-a) \right) \,dx
 +\int_a^{\infty} \exp\left( -\frac{x^2}{2}+pC\delta (x-a) \right) \,dx \\
&= \int_{-\infty}^a
 \exp\left( -\frac{(x+pC\delta)^2}{2} +\frac{(pC\delta)^2}{2} +pCa\delta \right) \,dx \\
&\quad +\int_a^{\infty}
 \exp\left( -\frac{(x-pC\delta)^2}{2} +\frac{(pC\delta)^2}{2} -pCa\delta \right) \,dx \\
&\le \exp\left( \frac{(pC\delta)^2}{2} +pCa\delta \right)
 \left\{ \int_{-\infty}^a \e^{-x^2/2} \,dx +pC\delta \right\} \\
&\quad +\exp\left( \frac{(pC\delta)^2}{2} -pCa\delta \right)
 \left\{ \int_a^{\infty} \e^{-x^2/2} \,dx +pC\delta \right\} \\
&\le \exp\left( \frac{(pC\delta)^2}{2} +pC|a|\delta \right)
 \big( \sqrt{2\pi} +2pC\delta \big).
\end{align*}
Therefore, we obtain
\begin{align*}
\int_I \big[ \e^{\bm{\psi}_{\g}-\psi} -1 \big]_+^p \,d\bm{\gamma}
&\le \exp\left( pC\delta +pC|a|\delta +\frac{(pC\delta)^2}{2} \right)
 \left( 1 +\frac{2pC\delta}{\sqrt{2\pi}} \right) -1 \\
&\le C(p,\theta)\delta.
\end{align*}
This completes the proof.
$\qedd$
\end{proof}

We remark that, since
\[ \left\{ \exp\left( pC\delta +\frac{(pC\delta)^2}{2} \right) -1 \right\}^{1/p}
 \ge \exp\left( C\delta +\frac{p(C\delta)^2}{2} \right) -1, \]
the constant $C(p,\theta)$ given by the above proof necessarily depends on $p$.
The order $\delta^{1/p}$ in Proposition~\ref{pr:L^p/I} may be compared with
$L^p$-estimates in \cite{IK} for the log-Sobolev inequality on Gaussian spaces.
One can see that the order $\delta^{1/p}$ is optimal from the following example.

\begin{example}\label{ex:1D}
Let $I=(-D,D)$ and $\fm=(1+\delta) \cdot \bm{\gamma}|_I$,
where $\delta>0$ is given by $\bm{\gamma}(I)=(1+\delta)^{-1}$.
Then, at $\theta=1/2$, we have $a_{1/2}=0$, $\fm((-\infty,0] \cap I)=1/2$,
\[ \e^{-\psi(0)} -\e^{-\bm{\psi}_{\g}(0)} =\frac{\delta}{\sqrt{2\pi}}, \]
and
\[ \| \e^{\bm{\psi}_{\g}-\psi} -1 \|_{L^p(\bm{\gamma})}
 =\left( \frac{\delta^p}{1+\delta} +\frac{\delta}{1+\delta} \right)^{1/p}
 =\left( \frac{1+\delta^{p-1}}{1+\delta} \right)^{1/p} \delta^{1/p}. \]
\end{example}

\subsection{A $W_2$-estimate}\label{ssc:W_2}

From Proposition~\ref{pr:psi},
one can also derive an upper bound of the \emph{$L^2$-Wasserstein distance}
between $\fm$ and $\bm{\gamma}$.
We refer to \cite{Vi} for the basics of optimal transport theory.
What we need is only the following \emph{Talagrand inequality}
with $\bm{\gamma}$ as the base measure (see \cite{Ta}, \cite[Theorem~22.14]{Vi}):
\begin{equation}\label{eq:Tala}
W_2^2(\fm,\bm{\gamma})
 \le 2\Ent_{\bm{\gamma}}(\fm)
 = 2\int_I (\bm{\psi}_{\g} -\psi)\e^{\bm{\psi}_{\g} -\psi} \,d\bm{\gamma},
\end{equation}
where $\Ent_{\bm{\gamma}}(\fm)$ is the \emph{relative entropy} of $\fm$
with respect to $\bm{\gamma}$.
We remark that both $\bm{\gamma}$ and $\fm$ have finite second moment
(by the $1$-convexity of $\psi$).

\begin{proposition}[A $W_2$-estimate on $I$]\label{pr:W_2/I}
Assume \eqref{eq:a_theta} and \eqref{eq:deficit}.
Then we have
\[ W_2(\fm,\bm{\gamma}) \le C(\theta) \sqrt{\delta} \]
for sufficiently small $\delta>0$ $($relative to $\theta)$.
\end{proposition}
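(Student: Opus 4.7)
The plan is to combine the Talagrand inequality~\eqref{eq:Tala} with the $L^2$-case of Proposition~\ref{pr:L^p/I}, using the elementary inequality $\log t \le t-1$ $(t>0)$ to linearize the relative entropy. Set $\rho:=\e^{\bm{\psi}_{\g}-\psi}$ on $I$ and $\rho:=0$ on $\R \setminus I$, so that $\rho$ is the density of $\fm$ with respect to $\bm{\gamma}$ and
\[ \Ent_{\bm{\gamma}}(\fm)
 =\int_I (\bm{\psi}_{\g}-\psi)\e^{\bm{\psi}_{\g}-\psi} \,d\bm{\gamma}
 =\int \rho \log \rho \,d\bm{\gamma}, \]
under the convention $0 \log 0=0$.

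From $\rho \log \rho \le \rho(\rho-1)=\rho^2-\rho$, integrating against $\bm{\gamma}$ and using $\int \rho \,d\bm{\gamma}=\fm(I)=1$ together with the identity $\int (\rho-1)^2 \,d\bm{\gamma}=\int \rho^2 \,d\bm{\gamma}-1$, I would obtain
\[ \Ent_{\bm{\gamma}}(\fm) \le \int (\rho^2-\rho) \,d\bm{\gamma}
 = \|\rho-1\|_{L^2(\bm{\gamma})}^2. \]
Applying Proposition~\ref{pr:L^p/I} with $p=2$ then yields $\|\rho-1\|_{L^2(\bm{\gamma})}^2 \le C(\theta)\delta$ for sufficiently small $\delta$, and inserting this into~\eqref{eq:Tala} gives $W_2^2(\fm,\bm{\gamma}) \le 2C(\theta)\delta$, whence the claim after taking square roots.

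There is no substantive obstacle in the argument, which is a clean chain of three inequalities; the only minor point requiring care is keeping the extension of $\rho$ by zero on $\R \setminus I$ consistent between the entropy integral and the $L^2$-norm, which is automatic under $0 \log 0=0$. A reasonable alternative route would be to estimate $(\bm{\psi}_{\g}-\psi)\e^{\bm{\psi}_{\g}-\psi}$ directly by applying \eqref{eq:delta>} together with~\eqref{eq:psi'/e} and then performing Gaussian moment computations in the spirit of the proof of Proposition~\ref{pr:L^p/I}; however, the path via Proposition~\ref{pr:L^p/I} is more concise and reuses work already done, so that is the route I would take.
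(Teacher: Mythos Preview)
Your argument is correct and is genuinely different from the paper's route. The paper estimates the entropy integrand $(\bm{\psi}_{\g}-\psi)\e^{\bm{\psi}_{\g}-\psi}$ directly: it applies the pointwise lower bound \eqref{eq:delta>} together with \eqref{eq:psi'/e} to get $(\bm{\psi}_{\g}-\psi)\e^{\bm{\psi}_{\g}-\psi}\le (C\delta|x-a|+C\delta)\e^{C\delta|x-a|+C\delta}$, then splits the resulting integral at $x=a$ and carries out explicit Gaussian computations (completing the square, integrating by parts) to arrive at $\Ent_{\bm{\gamma}}(\fm)\le C\delta$ before invoking Talagrand. Your approach instead bounds the relative entropy by the $\chi^2$-divergence via $\rho\log\rho\le\rho(\rho-1)$ and then quotes Proposition~\ref{pr:L^p/I} with $p=2$, which packages the same Gaussian calculations already done there. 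The advantage of your path is economy: three lines rather than a page of integral manipulations, and the $L^p$-estimate is reused rather than re-derived. The paper's direct calculation, on the other hand, is self-contained and yields an explicit bound on the entropy itself, which is the ``alternative route'' you mention at the end --- that route is in fact what the paper chose.
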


\begin{proof}
We again denote $a_{\theta}$ by $a$,
and $C$ will be a positive constant depending only on $\theta$.
Similarly to the proof of Proposition~\ref{pr:L^p/I},
we observe from \eqref{eq:delta>} and \eqref{eq:psi'/e} that
\begin{align*}
\int_I (\bm{\psi}_{\g} -\psi)\e^{\bm{\psi}_{\g} -\psi} \,d\bm{\gamma}
&\le \int_{-\infty}^{\infty} (C\delta|x-a|+C\delta) \e^{C\delta|x-a|+C\delta} \,\bm{\gamma}(dx) \\
&= \frac{C\delta}{\sqrt{2\pi}} \e^{C\delta}
 \int_{-\infty}^{\infty} (|x-a|+1) \exp\left( -\frac{x^2}{2}+C\delta|x-a| \right) \,dx \\
&\le C\delta \left\{
 \int_{-\infty}^{\infty} |x-a| \exp\left( -\frac{x^2}{2}+C\delta|x-a| \right) \,dx +C \right\},
\end{align*}
where we used
\[ \int_{-\infty}^{\infty} \exp\left( -\frac{x^2}{2}+C\delta|x-a| \right) \,dx \le C \]
from the proof of Proposition~\ref{pr:L^p/I}.
Then we have
\begin{align*}
&\int_{-\infty}^a (a-x) \exp\left( -\frac{x^2}{2}-C\delta (x-a) \right) \,dx \\
&= \exp\left( Ca\delta +\frac{(C\delta)^2}{2} \right)
 \int_{-\infty}^a (a-x) \exp\left( -\frac{(x+C\delta)^2}{2} \right) \,dx \\
&\le (1+C\delta)
  \left\{ (a+C\delta) \int_{-\infty}^a \exp\left( -\frac{(x+C\delta)^2}{2} \right) \,dx
 +\left[ \exp\left( -\frac{(x+C\delta)^2}{2} \right) \right]_{-\infty}^a \right\} \\
&\le (1+C\delta)
 \left\{ a\int_{-\infty}^a \e^{-x^2/2} \,dx +C\delta
 +\exp\left( -\frac{(a+C\delta)^2}{2} \right) \right\} \\
&\le a\int_{-\infty}^a \e^{-x^2/2} \,dx +\e^{-a^2/2} +C\delta.
\end{align*}
We similarly find
\begin{align*}
&\int_a^{\infty} (x-a) \exp\left( -\frac{x^2}{2}+C\delta (x-a) \right) \,dx \\
&= \exp\left( -Ca\delta +\frac{(C\delta)^2}{2} \right)
  \int_a^{\infty} (x-a) \exp\left( -\frac{(x-C\delta)^2}{2} \right) \,dx \\
&\le (1+C\delta)
 \left\{ (-a+C\delta) \int_a^{\infty} \exp\left( -\frac{(x-C\delta)^2}{2} \right) \,dx
 -\left[ \exp\left( -\frac{(x-C\delta)^2}{2} \right) \right]_a^{\infty} \right\} \\
&\le (1+C\delta)
 \left\{ -a\int_a^{\infty} \e^{-x^2/2} \,dx +C\delta
 +\exp\left( -\frac{(a-C\delta)^2}{2} \right) \right\} \\
&\le -a\int_a^{\infty} \e^{-x^2/2} \,dx +\e^{-a^2/2} +C\delta.
\end{align*}
Therefore, together with the Talagrand inequality \eqref{eq:Tala},
we obtain the desired estimate $W_2^2(\fm,\bm{\gamma}) \le C\delta$.
$\qedd$
\end{proof}

We do not know whether the order $\sqrt{\delta}$ in Proposition~\ref{pr:W_2/I} is optimal.
Since $W_p(\fm,\bm{\gamma}) \le W_2(\fm,\bm{\gamma})$ for any $p \in [1,2)$
by the H\"older inequality, we have, in particular,
a bound of the $L^1$-Wasserstein distance:
\[ W_1(\fm,\bm{\gamma}) \le C(\theta) \sqrt{\delta}. \]
One can alternatively infer this estimate from the Kantorovich--Rubinstein duality
(see \cite{Vi}); in fact,
\[ W_1(\fm,\bm{\gamma})
 \le \int_{-\infty}^{\infty} |x-a| \!\cdot\! |\e^{(\bm{\psi}_{\g}-\psi)(x)} -1| \,\bm{\gamma}(dx)
 \le C(\theta) \sqrt{\delta}. \]

We also remark that, when we take a detour
via the reverse Poincar\'e inequality in \cite[Proposition~5.1]{MO}
and the stability result \cite[Theorem~1.2]{CF},
we arrive at a weaker estimate
\[ W_1(\fm,\bm{\gamma}) \le C(\theta,\ve) \delta^{(1-\ve)/4}. \]
We refer to \cite{CMS,FGS} for stability results for the Poincar\'e inequality
(equivalently, the spectral gap)
on $\CD(N-1,N)$-spaces and $\RCD(N-1,N)$-spaces with $N \in (1,\infty)$.

\section{An $L^1$-estimate on weighted Riemannian manifolds}\label{sc:M}

Next, we consider a weighted Riemannian manifold,
namely a connected, complete $\cC^{\infty}$-Riemannian manifold $(M,g)$
of dimension $n \ge 2$ equipped with a probability measure $\fm=\e^{-\Psi} \vol_g$,
where $\Psi \in \cC^{\infty}(M)$ and $\vol_g$ is the Riemannian volume measure.
Assuming $\Ric_{\infty} \ge 1$, we have the Bakry--Ledoux isoperimetric inequality \eqref{eq:BL}.

We begin with an outline of the proof of \eqref{eq:BL}
via the \emph{needle decomposition} (see \cite{Kl}).
Given a Borel set $A \subset M$ with $\theta=\fm(A) \in (0,1)$,
we employ the function $f:=\chi_A -\theta$
($\chi_A$ denotes the characteristic function of $A$)
and an associated $1$-Lipschitz function $u:M \lra \R$
attaining the maximum of $\int_M f\phi \,d\fm$ among all $1$-Lipschitz functions $\phi$.
Then, analyzing the behavior of $u$, one can build a partition
$\{X_q\}_{q \in Q}$ of $M$ consisting of (the image of) minimal geodesics (called \emph{needles}),
and $Q$ is endowed with a probability measure $\nu$.
For $\nu$-almost every $q \in Q$, $u|_{X_q}$ has slope $1$
($|u(x)-u(y)|=d(x,y)$ for all $x,y \in X_q$)
and $X_q$ is equipped with a probability measure $\fm_q$
such that $\fm_q(A \cap X_q)=\theta$ and
$(X_q,|\cdot|,\fm_q)$ satisfies $\Ric_{\infty} \ge 1$.
Moreover, we have
\begin{equation}\label{eq:Q}
\int_M h \,d\fm =\int_Q \bigg( \int_{X_q} h \,d\fm_q \bigg) \,\nu(dq)
\end{equation}
for all $h \in L^1(\fm)$.
Then, \eqref{eq:BL} for $A$ is obtained by integrating its $1$-dimensional counterparts
for $A \cap X_q$ with respect to $\nu$.

The $1$-Lipschitz function $u$ is called the \emph{guiding function}.
We can assume $\int_M u \,d\fm=0$ without loss of generality,
and $X_q$ will be identified with an interval via $u$
(in other words, $X_q$ is parametrized by $u$).
Denote $\fm_q=\e^{-\sigma_q} \,dx$ and $\mu:=u_* \fm=\rho \,dx$.
Note that $\supp \mu$ is an interval and may not be the whole $\R$.
Through the parametrization of $X_q$ by $u$, we deduce from \eqref{eq:Q} that
\begin{equation}\label{eq:rho}
\rho(x) =\int_Q \e^{-\sigma_q(x)} \,\nu(dq),
\end{equation}
where we set $\e^{-\sigma_q(x)}:=0$ if $x \not\in X_q$.

\begin{theorem}[An $L^1$-estimate on $M$]\label{th:L^1/M}
Assume $\Ric_{\infty} \ge 1$ and fix $\ve \in (0,1)$.
If $\sP(A) \le \cI_{(\R,\bm{\gamma})}(\theta) +\delta$ holds for some Borel set $A \subset M$
with $\theta=\fm(A) \in (0,1)$ and sufficiently small $\delta$ $($relative to $\theta$ and $\ve)$,
then $u_* \fm=\rho \,dx$ satisfies
\[ \|\rho \cdot \e^{\bm{\psi}_{\g}} -1\|_{L^1(\bm{\gamma})} \le C(\theta,\ve) \delta^{(1-\ve)/(9-3\ve)}, \]
where $u$ is the guiding function associated with $A$
such that $\int_M u \,d\fm=0$.
\end{theorem}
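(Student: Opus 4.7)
The plan is to integrate the one-dimensional $L^1$-estimate of Proposition~\ref{pr:L^p/I} against the quotient measure $\nu$ from the needle decomposition. Using \eqref{eq:rho} and $\nu(Q)=1$, one has
\[
\rho(x)\e^{\bm{\psi}_{\g}(x)} - 1
= \int_Q \big( \e^{\bm{\psi}_{\g}(x) - \sigma_q(x)} - 1 \big)\,\nu(dq),
\]
so Fubini and the triangle inequality give
\[
\|\rho\,\e^{\bm{\psi}_{\g}} - 1\|_{L^1(\bm{\gamma})}
\le \int_Q \|\e^{\bm{\psi}_{\g} - \sigma_q} - 1\|_{L^1(\bm{\gamma})}\,\nu(dq).
\]
Let $a_q$ be the value with $\fm_q((-\infty, a_q]\cap X_q)=\theta$ and set the needle-wise deficit $\delta_q:=\e^{-\sigma_q(a_q)} - \cI_{(\R,\bm{\gamma})}(\theta) \ge 0$. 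By the decomposition of the perimeter across needles together with the $1$-dimensional isoperimetric minimality of the half-space $(-\infty, a_q]\cap X_q$, we have $\int_Q \delta_q\,\nu(dq) \le \delta$.

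The naive attempt is to apply Proposition~\ref{pr:L^p/I} directly to $\fm_q$ with $p=1$. The obstruction is that $a_q\ne a_\theta$ in general, so hypothesis \eqref{eq:a_theta} fails. I resolve this by translating: writing $\tau_q:=a_q-a_\theta$ and $\tilde\sigma_q(y):=\sigma_q(y+\tau_q)$, the translated measure $\tilde\fm_q=\e^{-\tilde\sigma_q}\,dx$ satisfies both \eqref{eq:a_theta} and \eqref{eq:deficit} with deficit $\delta_q$. Proposition~\ref{pr:L^p/I} then gives $\|\e^{\bm{\psi}_{\g}-\tilde\sigma_q}-1\|_{L^1(\bm{\gamma})}\le C(\theta)\delta_q$, and undoing the translation via the elementary $L^1(\R)$-Lipschitz bound $\|\e^{-\bm{\psi}_{\g}(\cdot-\tau_q)}-\e^{-\bm{\psi}_{\g}}\|_{L^1(\R)}\le C\min\{|\tau_q|,1\}$ yields
\[
\|\e^{\bm{\psi}_{\g}-\sigma_q}-1\|_{L^1(\bm{\gamma})}
\le C(\theta)\delta_q + C\min\{|\tau_q|,1\},
\]
so that
\[
\|\rho\,\e^{\bm{\psi}_{\g}}-1\|_{L^1(\bm{\gamma})}
\le C(\theta)\delta + C\int_Q \min\{|\tau_q|,1\}\,\nu(dq).
\]

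The main obstacle is to control the average shift $\int_Q |\tau_q|\,\nu(dq)$: the centering $\int_M u\,d\fm=0$ pins down only the average needle-mean, so individual $a_q$ can be far from $a_\theta$. My plan is to obtain an auxiliary, weaker variance-deficit (or $W_1$-type) bound for $u_*\fm$ through the detour via the reverse Poincar\'e inequality \cite[Proposition~5.1]{MO} and the Poincar\'e stability result \cite[Theorem~1.2]{CF}, parallel to the remark at the end of Section~\ref{ssc:W_2}. Decomposing
\[
\int_\R x^2\,d(u_*\fm) = \int_Q \big(m_q^2 + \mathrm{Var}(\fm_q)\big)\,\nu(dq),
\qquad m_q:=\int x\,d\fm_q,
\]
and combining with Proposition~\ref{pr:W_2/I} applied to each translated needle (which supplies $|\mathrm{Var}(\fm_q)-1|\le C(\theta)\sqrt{\delta_q}$ and $|\tilde m_q|\le C(\theta)\sqrt{\delta_q}$, hence $m_q^2\le 2\tau_q^2 + 2\tilde m_q^2$) isolates $\int_Q \tau_q^2\,\nu(dq)$ in terms of the auxiliary variance deficit and $\sqrt{\delta}$; Cauchy--Schwarz then controls $\int_Q |\tau_q|\,\nu(dq)$.

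Finally, balancing the contributions — possibly with an additional Markov split on $\delta_q$ — gives the exponent $(1-\ve)/(9-3\ve)$; the denominator $9-3\ve=3(3-\ve)$ reflects the cascade of the Poincar\'e-stability detour (which is responsible for the factor $1-\ve$), a Cauchy--Schwarz square-root loss on the shifts, and the final optimization between needle-deficit and shift thresholds. I expect the third paragraph to be the principal difficulty: quantitatively linking a variance deficit of $u_*\fm$ with the average squared shift $\int_Q \tau_q^2\,\nu(dq)$ of the needle-decomposition data without an extra loss that would worsen the final exponent.
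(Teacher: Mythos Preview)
Your opening reduction via \eqref{eq:rho} and Fubini, and the idea of translating each needle by $\tau_q$ so as to apply Proposition~\ref{pr:L^p/I}, are exactly what the paper does. The divergence is entirely in how you control the shifts $\tau_q$.

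The paper does \emph{not} attempt to bound $\int_Q |\tau_q|\,d\nu$ at all. Instead it invokes \cite[Proposition~7.3]{MO} as a black box: there is a set $Q_c$ with $\nu(Q\setminus Q_c)\le \delta^{(1-\ve)/(9-3\ve)}$ on which $|r_q^{\pm}-a_{\theta}|\le C(\theta,\ve)\,\delta^{(1-\ve)/(9-3\ve)}$. Combined with the Markov set $Q_\ell=\{q:\sP(A\cap X_q)<\cI_{(\R,\bm{\gamma})}(\theta)+\sqrt{\delta}\}$ from \cite[Lemma~7.1]{MO}, the ``good'' needles $Q_c\cap Q_\ell$ are handled by Proposition~\ref{pr:L^p/I} plus the translation cost, and the ``bad'' needles by the trivial bound $\|\e^{\bm{\psi}_{\g}-\sigma_q}-1\|_{L^1(\bm{\gamma})}\le 2$. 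The exponent $(1-\ve)/(9-3\ve)$ is thus imported directly from \cite[Proposition~7.3]{MO}, not produced inside this proof.

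Your alternative route through the reverse Poincar\'e inequality and \cite{CF} is where the genuine gap lies. You never derive the exponent; the final paragraph only rationalizes the number $9-3\ve$ after the fact. The paper itself notes (end of \S\ref{ssc:W_2}) that the Poincar\'e/CF detour already \emph{loses} in 1D, giving only $\delta^{(1-\ve)/4}$ instead of $\sqrt{\delta}$; compounding that with a Cauchy--Schwarz loss and a Markov split on $\delta_q$ is unlikely to land on exactly $(1-\ve)/(9-3\ve)$ without essentially reproving \cite[Proposition~7.3]{MO}. Moreover, your variance decomposition needs $\mathrm{Var}(\fm_q)\ge 1-C\sqrt{\delta_q}$ uniformly, but Proposition~\ref{pr:W_2/I} only applies when $\delta_q$ is already small, so you cannot avoid a good/bad split anyway --- at which point the paper's direct use of \cite[Proposition~7.3]{MO} is both shorter and gives the stated constant.

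One further correction: Bobkov's result gives $\sP(A\cap X_q)\ge \min\{\e^{-\sigma_q(r_q^-)},\e^{-\sigma_q(r_q^+)}\}$, not $\sP(A\cap X_q)\ge \e^{-\sigma_q(a_q)}$. So your inequality $\int_Q\delta_q\,d\nu\le\delta$ need not hold for the left threshold alone; the paper treats the two cases separately (and reverses $I$ in Proposition~\ref{pr:L^p/I} when the right half-space is the minimizer).
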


\begin{proof}
First of all, by \eqref{eq:rho} and Fubini's theorem, we have
\[ \|\rho \cdot \e^{\bm{\psi}_{\g}} -1\|_{L^1(\bm{\gamma})}
 =\int_{-\infty}^{\infty}
 \left| \int_Q (\e^{\bm{\psi}_{\g}-\sigma_q}-1) \,\nu(dq) \right| d\bm{\gamma}
 \le \int_Q \| \e^{\bm{\psi}_{\g}-\sigma_q}-1 \|_{L^1(\bm{\gamma})} \,\nu(dq). \]
We shall estimate
$\| \e^{\bm{\psi}_{\g}-\sigma_q}-1 \|_{L^1(\bm{\gamma})}$
by dividing into ``good'' needles and ``bad'' needles.
Note that $\nu(Q_{\ell}) \ge 1-\sqrt{\delta}$ holds for
\[ Q_{\ell} :=\big\{ q \in Q \,\big|\, \fm_q(A \cap X_q)=\theta,\,
 \sP(A \cap X_q) <\cI_{(\R,\bm{\gamma})}(\theta) +\sqrt{\delta} \big\} \]
by \cite[Lemma~7.1]{MO},
where $ \sP(A \cap X_q)$ denotes the perimeter of $A \cap X_q$ in $(X_q,|\cdot|,\fm_q)$.
Moreover, it follows from \cite[Proposition~7.3]{MO} that
there exists a measurable set $Q_c \subset Q$ such that
$\nu(Q_c) \ge 1-\delta^{(1-\ve)/(9-3\ve)}$ and
\[ \max\big\{ |a_{\theta} -r_q^-|,|a_{1-\theta}-r_q^+| \big\}
 \le C(\theta,\ve) \delta^{(1-\ve)/(9-3\ve)} \]
for all $q \in Q_c \cap Q_{\ell}$, where
$\fm_q((-\infty,r_q^-] \cap X_q)=\fm_q([r_q^+,\infty) \cap X_q)=\theta$
(recall that $\bm{\gamma}((-\infty,a_{\theta}]) =\bm{\gamma}([a_{1-\theta},\infty)) =\theta$).

On the one hand, for $q \in Q_c \cap Q_{\ell}$,
note that either $\sP(A \cap X_q) \ge \e^{-\sigma_q(r_q^-)}$ or $\sP(A \cap X_q) \ge \e^{-\sigma_q(r_q^+)}$
holds by \cite[Proposition~2.1]{Bob} (recall Subsection~\ref{ssc:L^p}).
When $\sP(A \cap X_q) \ge \e^{-\sigma_q(r_q^-)}$, we put
\[ \bm{\gamma}_q(dx) =\e^{-\bm{\psi}_{\g,q}(x)} \,dx
 :=\e^{-\bm{\psi}_{\g}(x+a_{\theta}-r_q^-)} \,dx, \]
which is a translation of $\bm{\gamma}$ satisfying $\bm{\gamma}_q((-\infty,r_q^-])=\theta$.
Then, it follows from Proposition~\ref{pr:L^p/I}
(with $\e^{-\sigma_q(r_q^-)} \le \sP(A \cap X_q) \le \e^{-\bm{\psi}_{\g,q}(r_q^-)} +\sqrt{\delta}$)
and Cavalieri's principle that
\begin{align*}
\| \e^{\bm{\psi}_{\g}-\sigma_q} -1 \|_{L^1(\bm{\gamma})}
&\le \| \e^{\bm{\psi}_{\g,q}-\sigma_q} -1 \|_{L^1(\bm{\gamma}_q)}
 +\| \e^{-\bm{\psi}_{\g,q}} -\e^{-\bm{\psi}_{\g}} \|_{L^1(dx)} \\
&\le C(\theta) \sqrt{\delta} +2\frac{|a_{\theta}-r_q^-|}{\sqrt{2\pi}} \\
&\le C(\theta,\ve) \delta^{(1-\ve)/(9-3\ve)}.
\end{align*}
We have the same bound also in the case where $\sP(A \cap X_q) \ge \e^{-\sigma_q(r_q^+)}$
by reversing $I$ in Proposition~\ref{pr:L^p/I}.

On the other hand, for $q \in Q \setminus (Q_c \cap Q_{\ell})$,
we have the trivial bound
\[  \| \e^{\bm{\psi}_{\g}-\sigma_q}-1 \|_{L^1(\bm{\gamma})}
 \le \| \e^{\bm{\psi}_{\g}-\sigma_q} \|_{L^1(\bm{\gamma})} +\| 1 \|_{L^1(\bm{\gamma})}
 =2. \]
Therefore, we obtain
\[ \|\rho \cdot \e^{\bm{\psi}_{\g}} -1\|_{L^1(\bm{\gamma})}
 \le C(\theta,\ve) \delta^{(1-\ve)/(9-3\ve)} +2\big( 1-\nu(Q_c \cap Q_{\ell}) \big)
 \le C(\theta,\ve) \delta^{(1-\ve)/(9-3\ve)}. \]
$\qedd$
\end{proof}

Note that $q \in Q_c \cap Q_{\ell}$ is well-behaved and can be handled by the $1$-dimensional analysis,
whereas one has a priori no information of $q \in Q \setminus (Q_c \cap Q_{\ell})$.
This could be a common problem for stability estimates via the needle decomposition
(see, e.g., \cite[Theorem~6.2]{MO} showing
a reverse Poincar\'e inequality on a manifold from a sharper estimate on intervals).
In particular, it may be difficult to achieve the same order $\delta$
as in the $1$-dimensional case (Proposition~\ref{pr:L^p/I}) by the needle decomposition.
In the $L^p$-case, it is unclear (to the authors)
with what we can replace the trivial bound
$\| \e^{\bm{\psi}_{\g}-\sigma_q}-1 \|_{L^1(\bm{\gamma})} \le 2$.
For the Wasserstein distance $W_2$ or $W_1$,
we have the same problem on the control of $q \in Q \setminus (Q_c \cap Q_{\ell})$.

\begin{remark}[Further related works and open problems]\label{rm:relate}
\begin{enumerate}[(a)]
\item
Theorem~\ref{th:L^1/M} holds true also for reversible Finsler manifolds by the same proof
(see \cite[Remark~7.6(c)]{MO} and \cite{Oneedle,Obook}).

\item
As we mentioned in the introduction,
our $L^p$- and $W_2$-estimates are inspired by the quantitative stability
for functional inequalities.
We refer to \cite{BGRS,FIL,IK,IM} for the study of the \emph{log-Sobolev inequality}
on the Gaussian space:
\[ \Ent_{\bm{\gamma}}(f \bm{\gamma})
 \le \frac{1}{2} \mathrm{I}_{\bm{\gamma}}(f \bm{\gamma})
 =\frac{1}{2} \int_{\R^n} \frac{\|\nabla f\|^2}{f} \,d\bm{\gamma}, \]
where $\mathrm{I}_{\bm{\gamma}}(f \bm{\gamma})$ is the \emph{Fisher information}
of a probability measure $f\bm{\gamma}$ with respect to $\bm{\gamma}$.
They investigated the difference between $\bm{\gamma}$ and $f\bm{\gamma}$,
in terms of the additive deficit
$\delta(f)=\mathrm{I}_{\bm{\gamma}}(f\bm{\gamma})/2 -\Ent_{\bm{\gamma}}(f \bm{\gamma})$.
For instance, $W_2$-bounds (under certain convexity and concavity conditions on $f$)
were given in \cite{BGRS,IM}, and $L^1$- and $L^p$-bounds can be found in \cite{IK}.
In the setting of weighted Riemannian manifolds satisfying $\Ric_{\infty} \ge 1$
(as in Theorem~\ref{th:L^1/M}),
we have only the rigidity (see \cite{OT}) and the stability is an open problem.

\item
We have seen in \cite[\S 6]{MO} that
the reverse forms of the Poincar\'e and log-Sobolev inequalities
can be derived from the isoperimetric deficit.
The reverse Poincar\'e inequality then implies a $W_1$-estimate
for the push-forward by an eigenfunction thanks to \cite[Theorem~1.3]{BF}
(see also \cite{FGS}).
We also expect a direct $W_1$- or $W_2$-estimate for the push-forward by the guiding function,
which remains an open question (see \cite[Remark~7.6(g)]{MO}).

\item
Another direction of research is a generalization to negative effective dimension,
i.e., $\Ric_N \ge K>0$ with $N<-1$.
We have established rigidity in the isoperimetric inequality in \cite{Ma2},
thereby it is natural to consider quantitative isoperimetry,
though it seems to require longer calculations.
\end{enumerate}
\end{remark}

{\small

}

\end{document}